%2multibyte Version: 5.50.0.2890 CodePage: 936
%% NB There should be only one primary classification, and zero or
%more secondary classifications.

\documentclass{amsart}
%%%%%%%%%%%%%%%%%%%%%%%%%%%%%%%%%%%%%%%%%%%%%%%%%%%%%%%%%%%%%%%%%%%%%%%%%%%%%%%%%%%%%%%%%%%%%%%%%%%%%%%%%%%%%%%%%%%%%%%%%%%%%%%%%%%%%%%%%%%%%%%%%%%%%%%%%%%%%%%%%%%%%%%%%%%%%%%%%%%%%%%%%%%%%%%%%%%%%%%%%%%%%%%%%%%%%%%%%%%%%%%%%%%%%%%%%%%%%%%%%%%%%%%%%%%%
\usepackage{amsfonts,amssymb,amsmath,amsthm}
\usepackage{url}
\usepackage{enumerate}

\setcounter{MaxMatrixCols}{10}
%TCIDATA{OutputFilter=LATEX.DLL}
%TCIDATA{Version=5.50.0.2890}
%TCIDATA{Codepage=936}
%TCIDATA{<META NAME="SaveForMode" CONTENT="1">}
%TCIDATA{BibliographyScheme=Manual}
%TCIDATA{LastRevised=Sunday, February 03, 2019 20:09:44}
%TCIDATA{<META NAME="GraphicsSave" CONTENT="32">}
%TCIDATA{Language=American English}

\makeatletter
\@namedef{subjclassname@2010}{  \textup{2010} Mathematics Subject Classification.}
\makeatother
\newtheorem{thrm}{Theorem}[section]

\theoremstyle{definition}

\newtheorem{remark}[thrm]{Remark}
\numberwithin{equation}{section}
\email{bmelmostafa@hotmail.com}
\email{hzoubeir2014@gmail.com}
\input{tcilatex}

\begin{document}
\address{ }
\author{Elmostafa Bendib}
\address{Ibn Tofail University, Department of Mathematics\\
Faculty of Sciences, P. O. B : $133,$ Kenitra, Morocco.}
\author{Hicham Zoubeir}
\address{Ibn Tofail University, Department of Mathematics\\
Faculty of Sciences, P. O. B : $133,$ Kenitra, Morocco.}
\title[Solvability in Gevrey classes of some linear functional equations]{%
Solvability in Gevrey classes of some linear functional equations}

\begin{abstract}
In this paper, we associate to each positive number $k$ a new class of
endomorphisms of the sheaf of germs of holomorphic functions on $[-1,1]$ and
prove the solvability in the Gevrey class $G_{k}\left( \left[ -1,1\right]
\right) $ of some linear functional equations related to these linear
endomorphisms.
\end{abstract}

\dedicatory{$\emph{This}$ $\emph{modest}$ $\emph{work}$ $\emph{is}$ $\emph{%
dedicated}$ $\emph{to}$ $\emph{the}$ $\emph{memory}$ $\emph{of}$ $\emph{our}$
$\emph{beloved}$ $\emph{master}$ $\emph{Ahmed}$ $\emph{Intissar}$ $\emph{%
(1951-2017),}$ $\emph{a}$ $\emph{distinguished}$ $\emph{professor,a}$ $\emph{%
brilliant}$ $\emph{mathematician,a}$ $\emph{man}$ $\emph{with}$ $\emph{a}$ $%
\emph{golden}$ $\emph{heart.}$\emph{\ }}
\subjclass[2010]{ 30D60, 39B72.}
\keywords{Linear functional equations, Gevrey classes.}
\maketitle

\section{Introduction}

The functional equations have been the subject\ of intensive studies because
of their relation to applied and social sciences. The extreme variety of
areas where functional equations are found only enhance their
attractiveness. In the study of such equations there are different
approaches and various research directions (cf for example (\cite{ACZ1})-(%
\cite{BELI}), (\cite{CHE})-(\cite{COR}), (\cite{KUC})). However in our
opinion there are a few studies on their solvability in Gevrey classes. In
this paper, we associate to each number $k>0$ a new class of endomorphisms
of the sheaf of germs of holomorphic functions on $[-1,1]$ and prove the
solvability in a Gevrey class of linear functional equations related to
these endomorphisms. We apply then the result obtained to prove the
solvability in the Gevrey class $G_{k}([-1,1])$ of some linear functional
equations.

\section{Notations, definitions and preleminaries}

Let $S$ be a nonempty subsets of $\mathbb{C}$ and $f\colon S\longrightarrow 
\mathbb{C}$ a bounded function. $\Vert f\Vert _{\infty ,S}$ denotes the
quantity :%
\begin{equation*}
\Vert f\Vert _{\infty ,S}=\sup_{z\in S}|f(z)|
\end{equation*}

For $z\in 
%TCIMACRO{\U{2102} }%
%BeginExpansion
\mathbb{C}
%EndExpansion
$ we set $\varrho (z,S$ $):=\underset{u\in S}{\inf }|z-u|.$

$O(S)$ denotes the set of holomorphic functions on some neighborhood of $S.$

For $z\in 
%TCIMACRO{\U{2102} }%
%BeginExpansion
\mathbb{C}
%EndExpansion
$ and $h>0$, $B(z,h)$ is the open ball in $%
%TCIMACRO{\U{2102} }%
%BeginExpansion
\mathbb{C}
%EndExpansion
$ with center $z$ and radius $h$ .

For $r>0$, $k>0$, $A>0$ and $n\in 
%TCIMACRO{\U{2115} }%
%BeginExpansion
\mathbb{N}
%EndExpansion
^{\ast }$, we set :%
\begin{equation*}
\left \{ 
\begin{array}{c}
\lbrack -1,1]_{r}=[-1,1]+B\text{ }(0,r) \\ 
\lbrack -1,1]_{k,A,n}:=[-1,1]+B(0;An^{-\frac{1}{k}})%
\end{array}%
\right.
\end{equation*}

Thus we have :%
\begin{equation*}
\left \{ 
\begin{array}{c}
\lbrack -1,1]_{r}=\{z\in 
%TCIMACRO{\U{2102} }%
%BeginExpansion
\mathbb{C}
%EndExpansion
:\varrho (z,[-1,1])<r\} \\ 
\lbrack -1,1]_{k,A,n}=\{z\in C:\varrho (z;[-1,1])<An^{\frac{-1}{k}}\}%
\end{array}%
\right.
\end{equation*}

Let $E$ \ be a non empty set and $g\colon E\longrightarrow E$ a mapping.For
every $n\in \mathbb{N},g^{<n>}$denotes the iterate of order $n$ of $g$ for
the composition of mappings.

Through this paper the real number $k>0$ will be a fixed\ constant real
number.

The Gevrey class $G_{k}$ $([-1,1])$ is the set of all functions $f\colon
\lbrack -1,1]\longrightarrow \mathbb{C}$ of class $C^{\infty }$ on $[-1,1]$
such that there exist some constants $C>0,A>0$ verifying the following
inequalities :%
\begin{equation*}
\Vert f^{\text{ }(n)}\Vert _{\infty ,[-1,1]}\leq CA^{n}n^{n(1+\frac{1}{k})},%
\text{ }n\in 
%TCIMACRO{\U{2115}}%
%BeginExpansion
\mathbb{N}%
%EndExpansion
\end{equation*}%
with the convention that $0^{0}=1.$

Let $\psi $ be a holomorphic function on a neighborhood of $[-1,1]$ such
that $\psi ([-1,1]\subset \lbrack -1,1]$. We set :%
\begin{equation*}
\lambda (\psi ):=\sup_{n\geq 1}\left( \frac{\parallel \psi ^{(n)}\parallel
_{\infty ,[-1,1]}}{n!}\right) ^{\frac{1}{n}}
\end{equation*}%
Let us observe that :%
\begin{equation*}
\lambda (\psi )<+\infty
\end{equation*}

A sequence $(f_{n})_{n\geq 1}$ of germes of holomorphic functions on $%
[-1,1]. $ $(f_{n})_{n\geq 1}$ is called $k-$sequence if there exists $B>0$
such that the following conditions hold for every $n\geq 1:$%
\begin{equation*}
\left \{ 
\begin{array}{c}
f_{n}\in O([-1,1]_{k,B,n\text{ }}) \\ 
||f_{n}||_{\infty ,[-1,1]_{k,A,n\text{ }}}\leq C\text{ }\theta ^{n}\text{ }%
\end{array}%
\right.
\end{equation*}%
where $C>0$ and $\theta \in ]0,1[$ are constants.

The following result which is a direct consequence of a theorem stated in (%
\cite{BELG}, page 223), point out the link betwen the set of $k-$sequences
and the Gevrey class $G_{k}([-1,1])$.

\begin{theorem}
\textit{A function }$F$\textit{\ of class }$C^{\infty }$\textit{on }$[-1;1]$%
\textit{\ belongs to }$G_{k}([-1;1])$\textit{\ if and only if\ there exists
a }$k-$\textit{sequence }$(g_{n})_{n\geq 1}$\textit{\  \ we have that }$:$%
\begin{equation*}
F=\underset{n=1}{\overset{+\infty }{\sum }}g_{n}|_{[-1;1]}
\end{equation*}
\end{theorem}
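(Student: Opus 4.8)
The plan is to establish the two implications separately; only the forward direction --- that every $F\in G_k([-1,1])$ admits such a representation --- is substantial, the converse being an exercise in Cauchy estimates, so I would organise the argument accordingly.

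\emph{Sufficiency of the representation.} Assume $(g_n)_{n\ge1}$ is a $k$-sequence, with constants $B>0$, $C>0$, $\theta\in\,]0,1[$ as in the definition, and $F=\sum_{n\ge1}g_n|_{[-1,1]}$. Fix $p\in\mathbb{N}$ and $x\in[-1,1]$. Since $g_n$ is holomorphic on $[-1,1]_{k,B,n}$, the closed disc with centre $x$ and radius $\tfrac12Bn^{-1/k}$ lies in its domain, so the Cauchy inequalities give
\[
|g_n^{(p)}(x)|\le p!\Big(\frac{2}{B}\Big)^{p}n^{p/k}\,\|g_n\|_{\infty,[-1,1]_{k,B,n}}\le C\,p!\Big(\frac{2}{B}\Big)^{p}n^{p/k}\theta^{n}.
\]
Summing over $n$ and using that, for any fixed $\theta'\in\,]\theta,1[$, one has $\sup_{t>0}t^{p/k}(\theta/\theta')^{t}=\big(\tfrac{p}{k e\ln(\theta'/\theta)}\big)^{p/k}$, hence $n^{p/k}\theta^{n}\le\big(\tfrac{p}{k e\ln(\theta'/\theta)}\big)^{p/k}(\theta')^{n}$, one obtains
\[
\|F^{(p)}\|_{\infty,[-1,1]}\le\frac{C\theta'}{1-\theta'}\Big(\frac{2}{B}\Big)^{p}\Big(\frac{1}{k e\ln(\theta'/\theta)}\Big)^{p/k}p!\;p^{p/k}.
\]
Because $p!\le p^{p}$ (with the convention $0^{0}=1$), the right-hand side has the form $C_{0}A_{0}^{\,p}p^{\,p(1+1/k)}$, which is precisely the defining estimate for $G_k([-1,1])$.

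\emph{Necessity of the representation.} Conversely, let $F\in G_k([-1,1])$. I would first extend $F$ to an element of the same Gevrey class on a slightly larger interval, and then pick a Dyn'kin-type almost-analytic extension $\widetilde F\in C^{1}(\overline{\Omega})$ of it on a complex neighbourhood $\Omega:=[-1,1]_{r}$ of $[-1,1]$; since the Gevrey index here equals $1+\tfrac1k$, this extension can be chosen so that $\big|\overline{\partial}\,\widetilde F(z)\big|\le C\exp\!\big(-c\,\varrho(z,[-1,1])^{-k}\big)$ for $z\in\Omega$. The Cauchy--Pompeiu formula then gives, for $x\in[-1,1]$,
\[
F(x)=\frac{1}{2\pi i}\int_{\partial\Omega}\frac{\widetilde F(\zeta)}{\zeta-x}\,d\zeta-\frac{1}{\pi}\int_{\Omega}\frac{\overline{\partial}\,\widetilde F(\zeta)}{\zeta-x}\,dA(\zeta).
\]
I would take the boundary term (holomorphic and bounded on all of $\Omega\supseteq[-1,1]_{k,B,1}$ once $B\le r$) as $g_{1}$, cut the area integral along the annular regions $\Omega_{n}:=\{\zeta\in\Omega:\,B(n+1)^{-1/k}\le\varrho(\zeta,[-1,1])<Bn^{-1/k}\}$, and put $g_{n+1}(x):=-\tfrac1\pi\int_{\Omega_{n}}\frac{\overline{\partial}\,\widetilde F(\zeta)}{\zeta-x}\,dA(\zeta)$. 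Each $g_{n+1}$ is holomorphic on $[-1,1]_{k,B,n+1}$, since its only possible singularities lie in $\overline{\Omega_{n}}$; on the smaller set $[-1,1]_{k,B/2,n+1}$ one has $|\zeta-x|\ge\tfrac12B(n+1)^{-1/k}$, while $\mathrm{Area}(\Omega_{n})\lesssim Bn^{-1/k}$ and $|\overline{\partial}\,\widetilde F|\le C\exp(-cB^{-k}(n+1))$ on $\Omega_{n}$, so that $\|g_{n+1}\|_{\infty,[-1,1]_{k,B/2,n+1}}\le C'\theta^{\,n+1}$ with $\theta:=e^{-cB^{-k}}\in\,]0,1[$. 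Thus $(g_{n})_{n\ge1}$ is a $k$-sequence and $\sum_{n\ge1}g_{n}|_{[-1,1]}=F$ by dominated convergence, the integrals being absolutely convergent because $\overline{\partial}\,\widetilde F$ decays faster than any power of $\varrho(\cdot,[-1,1])$ near $[-1,1]$.

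The main difficulty lies entirely in the necessity direction: one must produce summands that are at the same time holomorphic on neighbourhoods shrinking like $n^{-1/k}$ \emph{and} genuinely geometrically small on them, and these two requirements work against each other --- a naive construction based on polynomial approximation on $[-1,1]$ does not readily yield the required geometric decay. It is exactly the Gevrey index $1+\tfrac1k$, manifested in the decay $\exp(-c\,\varrho^{-k})$ of $\overline{\partial}$ of the almost-analytic extension, that reconciles them; this is the mechanism behind the theorem of \cite{BELG} cited above, from which the present statement follows once the neighbourhoods $[-1,1]_{k,B,n}$ are matched with those used there.
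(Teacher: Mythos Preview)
The paper does not actually prove this theorem: it simply records it as ``a direct consequence of a theorem stated in \cite{BELG}, page 223'' and moves on. Your proposal therefore supplies a genuine argument where the paper gives none, and you are explicitly aware of this, since you close by pointing back to \cite{BELG} yourself.

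On the substance, your two directions are sound. The sufficiency half is the routine Cauchy-estimate computation, and your handling of the factor $n^{p/k}\theta^{n}$ via $\sup_{t>0}t^{p/k}(\theta/\theta')^{t}$ is clean and gives exactly the Gevrey growth $p^{p(1+1/k)}$. For the necessity half, the Dyn'kin almost-analytic extension with $|\overline{\partial}\widetilde F|\le C\exp(-c\,\varrho^{-k})$ followed by the Cauchy--Pompeiu slicing over the annuli $\Omega_{n}$ is the natural construction, and your bookkeeping (holomorphy of $g_{n+1}$ on $[-1,1]_{k,B,n+1}$ because $\overline{\Omega_{n}}$ is disjoint from that open set; the lower bound $|\zeta-x|\ge\tfrac{B}{2}(n+1)^{-1/k}$ on the half-radius neighbourhood; the resulting geometric decay $\theta=e^{-cB^{-k}}$) is correct. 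Two small points worth flagging for a final write-up: the preliminary extension of $F$ from $[-1,1]$ to a slightly larger interval in the same Gevrey class deserves a one-line justification (e.g.\ via a Whitney/Borel-type extension for Gevrey classes), and you should make explicit that the single constant in the $k$-sequence definition is taken to be $B/2$, so that both the holomorphy and the sup-norm bound live on the same neighbourhoods.

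In short: there is nothing to compare against in the paper itself, and your argument is a correct and self-contained reconstruction of the mechanism behind the cited result.
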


Let $\tciFourier :=(\varphi _{p})_{p\in 
%TCIMACRO{\U{2115} }%
%BeginExpansion
\mathbb{N}
%EndExpansion
}$ be a sequence of functions $\varphi _{p}:[-1,1]_{\sigma }\rightarrow 
%TCIMACRO{\U{2102} }%
%BeginExpansion
\mathbb{C}
%EndExpansion
$ $\left( \sigma >0\right) $. We say that $\tciFourier $ verifies the $E(k)$
property if there exists a constant $\tau \in $ $]0,\sigma ]$ depending only
on $\tciFourier $ such that for all $A$ in $]0,\tau ]$ there exists an
integer $M(A)$ depending only on $A$ such that we have :%
\begin{equation}
\varphi _{p}([-1,1]_{k,A,n+1})\subset \lbrack -1,1]_{k,A,n\text{ }},n\geq
M(A),\text{ }p\in 
%TCIMACRO{\U{2115} }%
%BeginExpansion
\mathbb{N}
%EndExpansion
\label{(2.2)}
\end{equation}%
The real $\tau $ is then called a $k$-threshold for the family $\tciFourier $%
.

\begin{remark}
If the family $\tciFourier $ verify the $\mathfrak{A}(k)$ property then%
\begin{equation*}
\varphi _{p}([-1,1])\subset \lbrack -1,1],\text{ }p\in 
%TCIMACRO{\U{2115} }%
%BeginExpansion
\mathbb{N}
%EndExpansion
\text{ \  \ }
\end{equation*}
\end{remark}

An endomorphism $T$ of the sheaf $O([-1,1])$ of holomorphic functions on $%
[-1,1]$ is said to verify the $\mathfrak{A}(k)$ property if there exist two
constants $\tau >0,\rho \in ]0,1[$ depending only on $T$ such that for all $%
A $ in $]0,\tau ]$ there exists an integer $N(A)$ depending only on $A$
satisfying the following properties :%
\begin{equation}
T(O([-1,1]_{k,n,A})\subset O([-1,1]_{k,n+1,A},\text{ }n\geq N(A)
\label{(2.3)}
\end{equation}%
\begin{equation}
||T(f)||_{\infty ,[-1,1]_{k,n+1,A}}\leq \rho ||f||_{\infty ,[-1,1]_{k,n,A}}\
\ ,n\geq N(A),\text{ }f\in O([-1,1]_{k,n,A})  \label{(2.4)}
\end{equation}%
\begin{equation}
||T(f\text{ })|_{[-1,1]}||_{\infty ,[-1,1]}\leq \rho ||f\text{ }%
|_{[-1,1]}||_{\infty ,[-1,1]},\text{ }f\in O([-1,1])  \label{(2.5)}
\end{equation}%
The real $\tau $ is then called a $k$-threshold for the endomorphism $T$.

An endomorphism $T$ of the sheaf $O([-1,1])$ which verifies the $\mathfrak{A}%
(k)$ property has the following fundamental property.

\begin{proposition}
\textit{Let }$T$\textit{\  \ be an endomorphism of the sheaf }$O([-1,1])$%
\textit{\ which verify the }$\mathfrak{A}(k)$\textit{\ property. Then }$T$%
\textit{\ induces a unique endomorphism }$\widetilde{T\text{ }}$\textit{of
the Gevrey class }$G_{k}([-1,1])$\textit{\ such that for evey }$k-$\textit{%
sequence }$(f_{n})_{n\geq 1}$\textit{\ the sequence }$(T(f_{n}))_{n\geq 1}$%
\textit{\ is also a }$k-$\textit{sequence and the following condition holds }%
$:$%
\begin{equation}
\widetilde{T\text{ }}(\underset{n=1}{\overset{+\infty }{\sum }}%
f_{n}|_{[-1,1]})=\underset{n=1}{\overset{+\infty }{\sum }}T(f_{n})|_{[-1,1]}
\label{restriction}
\end{equation}
\end{proposition}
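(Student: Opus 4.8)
The plan is to build $\widetilde{T}$ directly from the characterization theorem recalled above: every $F\in G_{k}([-1,1])$ can be written $F=\sum_{n\ge 1}g_{n}|_{[-1,1]}$ for some $k$-sequence $(g_{n})_{n\ge 1}$, and identity (\ref{restriction}) forces us to set $\widetilde{T}(F):=\sum_{n\ge 1}T(g_{n})|_{[-1,1]}$. Once $\widetilde{T}$ is shown to be well defined this also settles uniqueness, since (\ref{restriction}) pins $\widetilde{T}$ down on every $F$ as soon as a $k$-sequence representation of $F$ exists. So the substance will be three claims: (i) if $(g_{n})_{n\ge 1}$ is a $k$-sequence then so is $(T(g_{n}))_{n\ge 1}$ -- this makes the defining series legitimate and, by the characterization theorem, puts its sum back into $G_{k}([-1,1])$; (ii) the sum $\sum_{n\ge 1}T(g_{n})|_{[-1,1]}$ does not depend on the $k$-sequence chosen to represent $F$; (iii) $\widetilde{T}$ is linear.

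To prove (i) I would fix a $k$-sequence $(g_{n})_{n\ge 1}$ with parameter $B$ and bounds $\|g_{n}\|_{\infty,[-1,1]_{k,B,n}}\le C\theta^{n}$, $\theta\in\,]0,1[$, and let $\tau,\rho,N(\cdot)$ be as in the $\mathfrak{A}(k)$ property of $T$. Put $A:=\min(B,\tau)$; since $[-1,1]_{k,A,n}\subset[-1,1]_{k,B,n}$ one still has $g_{n}\in O([-1,1]_{k,A,n})$ with $\|g_{n}\|_{\infty,[-1,1]_{k,A,n}}\le C\theta^{n}$. For $n\ge N(A)$, (\ref{(2.3)}) and (\ref{(2.4)}) give $T(g_{n})\in O([-1,1]_{k,n+1,A})$ and $\|T(g_{n})\|_{\infty,[-1,1]_{k,n+1,A}}\le\rho C\theta^{n}\le C\theta^{n}$; since $n/(n+1)\ge 1/2$ for every $n\ge 1$, with $A':=2^{-1/k}A$ one has $[-1,1]_{k,A',n}\subset[-1,1]_{k,n+1,A}$, hence $T(g_{n})\in O([-1,1]_{k,A',n})$ and $\|T(g_{n})\|_{\infty,[-1,1]_{k,A',n}}\le C\theta^{n}$ for all $n\ge N(A)$. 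The finitely many indices $n<N(A)$ are handled by shrinking $A'$ once more so that each holomorphic germ $T(g_{n})$ is defined on $[-1,1]_{k,A',n}$, and then enlarging $C$ to absorb the corresponding finitely many sup-norms; with the same $\theta$ this exhibits $(T(g_{n}))_{n\ge 1}$ as a $k$-sequence, and the characterization theorem gives $\sum_{n\ge 1}T(g_{n})|_{[-1,1]}\in G_{k}([-1,1])$.

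Part (ii) is where property (\ref{(2.5)}) is essential, and I expect it to be the main obstacle. Let $(g_{n})_{n\ge 1}$ and $(h_{n})_{n\ge 1}$ be $k$-sequences with $\sum_{n\ge 1}g_{n}|_{[-1,1]}=\sum_{n\ge 1}h_{n}|_{[-1,1]}$. Passing to the smaller of the two parameters and adding the geometric bounds shows that $d_{n}:=g_{n}-h_{n}$ defines a $k$-sequence with $\sum_{n\ge 1}d_{n}|_{[-1,1]}=0$, the convergence being uniform on $[-1,1]$. Setting $S_{N}:=\sum_{n=1}^{N}d_{n}\in O([-1,1])$, linearity of $T$ gives $\sum_{n=1}^{N}T(d_{n})|_{[-1,1]}=T(S_{N})|_{[-1,1]}$, and (\ref{(2.5)}) yields $\|T(S_{N})|_{[-1,1]}\|_{\infty,[-1,1]}\le\rho\|S_{N}|_{[-1,1]}\|_{\infty,[-1,1]}\to 0$. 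Since $(T(d_{n}))_{n\ge 1}$ is a $k$-sequence by (i), the series $\sum_{n\ge 1}T(d_{n})|_{[-1,1]}$ converges uniformly and its sum is $\lim_{N}T(S_{N})|_{[-1,1]}=0$; as $T(d_{n})=T(g_{n})-T(h_{n})$ and both $\sum T(g_{n})|_{[-1,1]}$, $\sum T(h_{n})|_{[-1,1]}$ converge, this says exactly $\sum_{n\ge 1}T(g_{n})|_{[-1,1]}=\sum_{n\ge 1}T(h_{n})|_{[-1,1]}$.

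Finally, for (iii), representing $F,G\in G_{k}([-1,1])$ by $k$-sequences $(g_{n}),(h_{n})$ and noting that $(g_{n}+h_{n})$ and $(\alpha g_{n})$ are again $k$-sequences, termwise linearity of $T$ gives $\widetilde{T}(F+G)=\widetilde{T}(F)+\widetilde{T}(G)$ and $\widetilde{T}(\alpha F)=\alpha\widetilde{T}(F)$; together with (i) and (ii) this makes $\widetilde{T}$ a well-defined endomorphism of $G_{k}([-1,1])$, and (\ref{restriction}) holds by construction. Applying (\ref{restriction}) to the $k$-sequence $(f,0,0,\dots)$ (legitimate for any $f\in O([-1,1])$, after choosing a small radius) gives $\widetilde{T}(f|_{[-1,1]})=T(f)|_{[-1,1]}$, so $\widetilde{T}$ genuinely extends $T$; and any endomorphism of $G_{k}([-1,1])$ satisfying (\ref{restriction}) is determined on each $F$ by a $k$-sequence representation of $F$, whence uniqueness.
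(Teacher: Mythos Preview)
Your argument is correct and follows the same route as the paper: the heart of the matter is well-definedness, handled in your part~(ii) by forming partial sums of the difference sequence, using linearity of $T$, and invoking~(\ref{(2.5)}) to send the partial sums to zero---exactly the paper's device with $\Phi_{n}=\sum_{j\le n}w_{j}$. Your part~(i), the verification that $(T(g_{n}))_{n\ge 1}$ is again a $k$-sequence via the shrink $A'=2^{-1/k}A$ and a finite adjustment below $N(A)$, supplies detail that the paper states in the proposition but leaves unproved in its argument; likewise your explicit treatment of linearity and uniqueness is more complete than the paper's one-line assertion.
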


\begin{proof}
Let $f$ $\in G_{k}([-1,1]).$ Let $(g_{n})_{n\geq 1}$ and $(h_{n})_{n\geq 1}$
be $k-$sequences such that :%
\begin{equation*}
f=\underset{n=1}{\overset{+\infty }{\sum }}g_{n}|_{[-1,1]}=\underset{n=1}{%
\overset{+\infty }{\sum }}h_{n}|_{[-1,1]}
\end{equation*}%
Then there exist $A_{1}>0,$ $C_{1}>0,$ $0<\theta _{1}<1$ such that the
following conditions hold for all $n\geq 1:$%
\begin{equation*}
\left \{ 
\begin{array}{c}
g_{n},h_{n}\in O([-1,1]_{k,A_{1},n\text{ }}) \\ 
\max \left( ||g_{n}||_{\infty ,[-1,1]_{k,A_{1},n\text{ }}},||h_{n}||_{\infty
,[-1,1]_{k,A_{1},n\text{ }}}\right) \leq C_{1}\theta _{1}^{n}%
\end{array}%
\right.
\end{equation*}%
Let us set for all $n\geq 1:$%
\begin{equation*}
\left \{ 
\begin{array}{c}
w_{n}:=g_{n}|_{[-1,1]_{k,A_{1},n\text{ }}-}h_{n}|_{[-1,1]_{k,A_{1},n\text{ }%
}} \\ 
\Phi _{n}:=\overset{n}{\underset{j=1}{\sum }}w_{j}%
\end{array}%
\right.
\end{equation*}%
We have for all $n\geq 1:$%
\begin{eqnarray*}
&&T(\Phi _{n})|_{[-1,1]} \\
&=&\overset{n}{\underset{j=1}{\sum }}T(w_{j})|_{[-1,1]} \\
&=&\overset{n}{\underset{j=1}{\sum }}T(g_{j})|_{[-1,1]}-\overset{n}{\underset%
{j=1}{\sum }}T(h_{j})|_{[-1,1]}
\end{eqnarray*}%
Since the sequence of functions $(\Phi _{n})_{n\geq 1}$is uniformly
convergent on $[-1,1]$ to the null function, it follows from the condition (%
\ref{(2.5)}) that the function series $\sum T(g_{n})|_{[-1,1]}$ and $\sum
T(h_{n})|_{[-1,1]}$ are uniformly convergent\ on $[-1,1]$ to the same
function which we denote by $\widetilde{T\text{ }}(f$ $).$ The mapping $%
\widetilde{T\text{ \ }}$ is well defined and linear on the Gevrey class $%
G_{k}([-1,1]).$ Furthermore $\widetilde{T\text{ \ }}$ is the unique
endomorphism on $G_{k}([-1,1])$ which satisfies the condition (\ref%
{restriction})$.$
\end{proof}

\section{Statement of the main result and of its corollary}

\begin{theorem}
\textit{Let }$T$\textit{\ an endomorphism on the sheaf }$O([-1,1])$\textit{\
which verify the }$\mathfrak{A}(k)$\textit{\ property. Then for every }$u\in
O([-1,1])$\textit{\ the function series }$\sum T^{\text{ }<n>}(u)|_{[-1,1]}$%
\textit{\ is uniformly convergent on }$[-1,1]$\textit{\ and its sum }$v$%
\textit{\ is a solution of the linear functional equation }%
\begin{equation}
\Phi -\widetilde{T\text{ }}(\Phi )=u  \label{(3.1)}
\end{equation}%
\textit{which belongs to the Gevrey class }$G_{k}([-1,1]).$
\end{theorem}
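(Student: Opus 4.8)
The plan is to build $v$ as the sum of the Neumann‐type series $\sum_{n\geq 0} T^{<n>}(u)\big|_{[-1,1]}$, to identify the family $(T^{<n>}(u))_{n\geq 0}$ (after a suitable index shift) with a $k$-sequence so that Theorem~2.1 applies and gives $v\in G_k([-1,1])$, and finally to verify that $v$ solves \eqref{(3.1)} by a telescoping argument using the compatibility relation \eqref{restriction}.

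First I would fix a $k$-threshold $\tau>0$ and $\rho\in]0,1[$ for $T$, choose any admissible $A\in]0,\tau]$, and let $N:=N(A)$ be the associated integer from the $\mathfrak{A}(k)$ property. Since $u\in O([-1,1])$, it is holomorphic on some $[-1,1]_r$; shrinking $A$ if necessary we may assume $u\in O([-1,1]_{k,A,N})$ and set $\beta:=\|u\|_{\infty,[-1,1]_{k,A,N}}$. Then I would prove by induction on $m\geq 0$, using \eqref{(2.3)} and \eqref{(2.4)} repeatedly, that
\begin{equation*}
T^{<m>}(u)\in O\big([-1,1]_{k,A,N+m}\big),\qquad \big\|T^{<m>}(u)\big\|_{\infty,[-1,1]_{k,A,N+m}}\leq \beta\,\rho^{m}.
\end{equation*}
Re-indexing by $g_n:=T^{<n-1>}(u)$ restricted to $[-1,1]_{k,A,n}$ for $n\geq 1$ — or more precisely shifting so the inclusion $[-1,1]_{k,A,N+m}\supset[-1,1]_{k,A,m'}$ for the running index lines up — exhibits a genuine $k$-sequence (with the constant $C$ absorbing the finite factor coming from the shift by $N$, and $\theta:=\rho$). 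By Theorem~2.1 the function $v:=\sum_{n\geq 0}T^{<n>}(u)\big|_{[-1,1]}$ is a well-defined element of $G_k([-1,1])$, and the series converges uniformly on $[-1,1]$ because \eqref{(2.5)} already gives $\|T^{<n>}(u)|_{[-1,1]}\|_{\infty,[-1,1]}\leq \rho^n\|u|_{[-1,1]}\|_{\infty,[-1,1]}$.

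It remains to check the functional equation. Writing $S_N:=\sum_{n=0}^{N}T^{<n>}(u)$, one has on $[-1,1]$ the identity $S_N - T(S_N-u)\text{-type}$ telescoping; precisely, applying $\widetilde{T}$ to $v$ and using \eqref{restriction} with the $k$-sequence $(T^{<n>}(u))_{n\geq 0}$ — noting that $(T(T^{<n>}(u)))_{n\geq 0}=(T^{<n+1>}(u))_{n\geq 0}$ is again a $k$-sequence by the same induction — gives
\begin{equation*}
\widetilde{T}(v)=\sum_{n=0}^{+\infty}T\big(T^{<n>}(u)\big)\Big|_{[-1,1]}=\sum_{n=1}^{+\infty}T^{<n>}(u)\Big|_{[-1,1]}=v-u\big|_{[-1,1]},
\end{equation*}
which is exactly \eqref{(3.1)}. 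The main obstacle I anticipate is purely bookkeeping: the $\mathfrak{A}(k)$ property only improves the domain index by $1$ starting from $n\geq N(A)$, so the induction above must be started at level $N(A)$ and the resulting estimates re-indexed, and one must confirm that the finitely many "low" iterates $T^{<0>}(u),\dots,T^{<N(A)-1>}(u)$ are still holomorphic on a common neighborhood of $[-1,1]$ (true since $u\in O([-1,1])$ and \eqref{(2.5)} controls them on $[-1,1]$, while \eqref{(2.3)} on a fixed small neighborhood handles the rest) so that they can be absorbed into the constant $C$ of the $k$-sequence without destroying the geometric decay $\theta^n$.
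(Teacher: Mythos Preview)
Your proposal is correct and follows essentially the same route as the paper: choose a radius $r\in]0,\tau]$ with $u\in O([-1,1]_r)$, use the $\mathfrak{A}(k)$ property to show inductively that $T^{<n>}(u)\in O([-1,1]_{k,r,n})$ with $\|T^{<n>}(u)\|_{\infty,[-1,1]_{k,r,n}}\leq C\rho^{\,n-N}$ for $n\geq N$, recognize this as a $k$-sequence, and invoke Theorem~2.1. Your write-up is in fact more complete than the paper's, which simply asserts that the sum solves \eqref{(3.1)} without spelling out the telescoping via \eqref{restriction}, and which does not discuss the finitely many low iterates at all.
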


\begin{corollary}
\textit{Let }$a:=$\textit{\ }$(a_{n})_{n\geq 0}$\textit{\ be a sequence of
holomorphic functions on }$[-1,1]_{\sigma }$ $(\sigma >0)$\textit{\ and }$%
\varphi :=(\varphi _{n})_{n\geq 0}$\textit{\ a sequence which verify the }$%
E(k)$\textit{\ property. Assume that }$:$%
\begin{equation}
\overset{+\infty }{\underset{n=0}{\sum }}||a_{n}||_{\infty ,[-1,1]_{\sigma
}}<1  \label{(3.2)}
\end{equation}%
\textit{Then the linear functional equation }$:$%
\begin{equation}
\Phi (x)-\overset{+\infty }{\underset{n=0}{\sum }}a_{n}(x)\Phi (\varphi
_{n}(x))=u(x)  \label{(3.3)}
\end{equation}%
\textit{has a unique solution which furthermore belongs to the Gevrey class }%
$G_{k}([-1,1]).$
\end{corollary}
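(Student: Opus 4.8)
The plan is to realize the corollary as an application of the main theorem (Theorem 3.1). For this I must exhibit an endomorphism $T$ of the sheaf $O([-1,1])$, built from the data $(a_n)_{n\ge 0}$ and $(\varphi_n)_{n\ge 0}$, which verifies the $\mathfrak{A}(k)$ property, and such that the operator $\widetilde{T}$ on $G_k([-1,1])$ it induces sends $\Phi$ to $\sum_{n\ge 0} a_n\,\Phi\circ\varphi_n$; then equation (3.3) becomes exactly $\Phi-\widetilde{T}(\Phi)=u$, which is (3.1). So the natural definition is, for a germ $f\in O([-1,1])$ represented on a neighborhood $V$,
\begin{equation*}
T(f)(x):=\sum_{n=0}^{+\infty} a_n(x)\, f(\varphi_n(x)),
\end{equation*}
which makes sense wherever all the $\varphi_n$ map into $V$ and the series converges. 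By Remark 2.5 (the $E(k)$ property forces $\varphi_n([-1,1])\subset[-1,1]$) and the assumption $\sum_n\|a_n\|_{\infty,[-1,1]_\sigma}<1$, this series converges uniformly, so $T$ is well defined at least on the stalk over $[-1,1]$; the first genuine task is to check it is well defined and holomorphic on the small complex neighborhoods $[-1,1]_{k,A,n}$.

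The core of the proof is verifying the three conditions (2.3)--(2.5) defining $\mathfrak{A}(k)$. Let $\tau\in]0,\sigma]$ be a $k$-threshold for $\varphi=(\varphi_n)_n$, shrunk if necessary so that $\tau\le\sigma$; set $\rho:=\sum_{n\ge 0}\|a_n\|_{\infty,[-1,1]_\sigma}<1$. Fix $A\in]0,\tau]$ and let $M(A)$ be the integer from the $E(k)$ property, and put $N(A):=M(A)$. For inclusion (2.3): if $f\in O([-1,1]_{k,A,m})$ with $m\ge N(A)$, then by (2.2) each $\varphi_n$ maps $[-1,1]_{k,A,m+1}$ into $[-1,1]_{k,A,m}$, so each $x\mapsto a_n(x)f(\varphi_n(x))$ is holomorphic on $[-1,1]_{k,A,m+1}$ (using $[-1,1]_{k,A,m+1}\subset[-1,1]_\sigma$, valid since $A\le\tau\le\sigma$, so the $a_n$ are defined there), and the bound $|a_n(x)f(\varphi_n(x))|\le\|a_n\|_{\infty,[-1,1]_\sigma}\|f\|_{\infty,[-1,1]_{k,A,m}}$ gives normal convergence, hence $T(f)\in O([-1,1]_{k,A,m+1})$. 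Summing those same bounds over $n$ yields $\|T(f)\|_{\infty,[-1,1]_{k,A,m+1}}\le\rho\,\|f\|_{\infty,[-1,1]_{k,A,m}}$, which is (2.4); the restriction estimate (2.5) is the identical computation on $[-1,1]$ itself, using Remark 2.5. Thus $T$ verifies $\mathfrak{A}(k)$ with threshold $\tau$ and contraction constant $\rho$.

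With $T$ in hand, Theorem 3.1 applies verbatim: the series $\sum_n T^{<n>}(u)|_{[-1,1]}$ converges uniformly on $[-1,1]$ and its sum $v$ lies in $G_k([-1,1])$ and solves $\Phi-\widetilde{T}(\Phi)=u$. It remains to identify this equation with (3.3) — i.e. to argue that for $\Phi\in G_k([-1,1])$ one has $\widetilde{T}(\Phi)(x)=\sum_{n\ge 0}a_n(x)\Phi(\varphi_n(x))$ — which follows from Proposition 2.7: writing $\Phi=\sum_{j\ge1} g_j|_{[-1,1]}$ for a $k$-sequence $(g_j)_j$, the proposition gives $\widetilde{T}(\Phi)=\sum_j T(g_j)|_{[-1,1]}=\sum_j\sum_n a_n\,(g_j\circ\varphi_n)|_{[-1,1]}$, and interchanging the two sums (justified by the normal convergence established above) produces $\sum_n a_n\,\Phi\circ\varphi_n$. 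Finally, uniqueness: if $\Phi_1,\Phi_2$ both solve (3.3) in $G_k([-1,1])$, their difference $w$ satisfies $w=\widetilde{T}(w)$, hence $w=\widetilde{T}^{<n>}(w)$ for all $n$, and on $[-1,1]$ the contraction estimate (2.5) iterated gives $\|w\|_{\infty,[-1,1]}\le\rho^n\|w\|_{\infty,[-1,1]}\to0$, so $w\equiv0$.

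I expect the main obstacle to be the bookkeeping around the thresholds and the domains: one must make sure that for every admissible $A\le\tau$ the complexified neighborhoods $[-1,1]_{k,A,n}$ actually sit inside $[-1,1]_\sigma$ (so that the coefficients $a_n$ are available on them), that the single shift $n\mapsto n+1$ in the $E(k)$ property matches the shift required in (2.3)--(2.4), and that the interchange of the double series defining $\widetilde{T}(\Phi)$ is legitimate — the latter is where one really uses that $\sum_n\|a_n\|_{\infty,[-1,1]_\sigma}$ is strictly less than $1$ (or at least finite) rather than merely that each term is small. Everything else is a routine transcription of the hypotheses into the format demanded by $\mathfrak{A}(k)$ and an invocation of Theorem 3.1.
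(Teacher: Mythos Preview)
Your proposal is correct and follows essentially the same route as the paper: define $T(f)=\sum_{n\ge0}a_n\cdot(f\circ\varphi_n)$, verify the three conditions of the $\mathfrak{A}(k)$ property using the $E(k)$ inclusions and the bound $\rho=\sum_n\|a_n\|_{\infty,[-1,1]_\sigma}<1$, and then invoke Theorem~3.1. If anything you are more careful than the paper, which does not spell out the uniqueness argument or the identification of $\widetilde{T}(\Phi)$ with $\sum_n a_n\,\Phi\circ\varphi_n$ for $\Phi\in G_k([-1,1])$; your treatment of both points is fine.
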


\section{Proof$\ $of$\ $the$\ $main\ result and of its corollary}

\subsection{Proof of the main result}

Let $\tau $ be a $k$-threshold for the endomorphism $T$. Let $r\in ]0,\tau ]$
such that $u\in O([-1,1]_{r}).$ Consider the sequence of functions $%
(T^{\left \langle n\right \rangle }(u))_{n\geq 0}.$Then there exists, thanks
to the conditions (\ref{(2.2)}) and (\ref{(2.3)}) an integer $N$ such that :%
\begin{equation*}
\left \{ 
\begin{array}{c}
T(O([-1,1]_{k,n,r})\subset O([-1,1]_{k,n+1,r},\text{ }n\geq N \\ 
||T(f\text{ })||_{\infty ,[-1,1],r}\leq \rho ||f\text{ }||_{\infty
,[-1,1]_{k,n,r}},\text{ }n\geq N,\text{ }f\in O([-1,1]_{k,n,r}%
\end{array}%
\right.
\end{equation*}

It is then clear that we have for all $n\geq N:$%
\begin{equation*}
\left \{ 
\begin{array}{c}
T^{\left \langle n\right \rangle }(u)\in O([-1,1]_{k,n,r}) \\ 
||T^{\left \langle n\right \rangle }(u)||_{\infty ,[-1,1]_{k,,n,r}}\leq
||T(u)||_{\infty ,[-1,1]_{k,N,r}}.\rho ^{n-N}%
\end{array}%
\right.
\end{equation*}

It follows, by virtue of theorem 1, that the function series $\sum
T^{\left
\langle n\right \rangle }(u)|_{[-1,1]}$ is uniformly convergent to
a function $w\in G_{k}([-1,1])$ which is a solution of the equation (\ref%
{(3.1)}).

$\square $

\subsection{Proof of the corollary}

Let $f\in O([-1,1])$,then there exists $\alpha \in ]0,\sigma ]$ such that $%
f\in O([-1,1]_{\alpha })$ where $\sigma $\ is a $k-$threshold of the
sequence $\varphi .$ Then there exists $M\in 
%TCIMACRO{\U{2115} }%
%BeginExpansion
\mathbb{N}
%EndExpansion
^{\ast }$such that :%
\begin{equation*}
\varphi _{p}([-1,1]_{k,\alpha ,n+1})\subset \lbrack -1,1]_{k,\alpha ,n\text{ 
}},\text{ }n\geq M,\text{ }p\in 
%TCIMACRO{\U{2115}}%
%BeginExpansion
\mathbb{N}%
%EndExpansion
\end{equation*}%
It follows that :%
\begin{equation*}
\left \{ 
\begin{array}{c}
f\circ \varphi _{p}\in O([-1,1]_{k,M\text{ }+1\text{ ,}\alpha }),\text{ }%
p\in 
%TCIMACRO{\U{2115} }%
%BeginExpansion
\mathbb{N}
%EndExpansion
\\ 
|a_{p}(z)f\text{ }[\varphi _{p}(z)]|\leq ||a_{p}||_{\infty ,[-1,1]_{\sigma
}}||f\text{ }||_{\infty ,[-1,1]_{k,M\text{ }+1,\alpha \text{ }}},\text{ }%
z\in \lbrack -1,1]_{k,M\text{ }+1,\alpha \text{ }}%
\end{array}%
\right.
\end{equation*}%
Thence from the condition (\ref{(3.2)}) entails that the function series $%
\sum a_{n}.(f\circ \varphi _{n})$ is uniformly convergent on $%
[-1,1]_{k,\alpha ,M\text{ }+1\text{ }}.$ Thence if we set : 
\begin{equation*}
T_{1}(f\text{ }):=\underset{n=1}{\overset{+\infty }{\sum }}a_{n}.(f\circ
\varphi _{n})
\end{equation*}%
we define an endomorphism $T_{1}$ of $O([-1,1]).$ Let us prove that $T_{1}$
verify the $\mathfrak{A}(k)$ property. Let $A\in ]0,\sigma ]$ , then there
exists an integer $M(A)$ depending only on $A$ satisfying the following
properties :%
\begin{equation*}
\varphi _{p}([-1,1]_{k,m\text{ }+1,A})\subset O([-1,1]_{k,m,A}\text{ },\text{
}m\geq M(A)\text{ }
\end{equation*}%
Let $v\in O([-1,1]_{k,n,A})$ where $n$ is an integer such that $n\geq M(A).$
Then : 
\begin{equation*}
\left \{ 
\begin{array}{c}
v\circ \varphi _{p}\in O([-1,1]_{k,n+1,A}),\text{ }p\in 
%TCIMACRO{\U{2115} }%
%BeginExpansion
\mathbb{N}
%EndExpansion
\\ 
||a_{p}(u\text{ }\circ \varphi _{p})||_{\infty ,[-1,1]_{k,n+1,A}}\leq
||a_{p}||_{\infty ,[-1,1]_{\sigma }}||u||_{\infty ,[-1,1]_{k,,n,A\text{ }}},%
\text{ }p\in 
%TCIMACRO{\U{2115}}%
%BeginExpansion
\mathbb{N}%
%EndExpansion
\end{array}%
\right.
\end{equation*}%
It follows that the following facts hold for each integer $n\geq M(A):$ 
\begin{equation*}
\left \{ 
\begin{array}{c}
T_{1}(v)\in O([-1,1]_{k,n+1,A}) \\ 
||T_{1}(v)||_{\infty ,[-1,1]_{k,n+1,A}}\leq \left( \overset{+\infty }{%
\underset{p=0}{\sum }}||a_{p}||_{\infty ,[-1,1]_{\sigma }}\right)
||u||_{\infty ,[-1,1]_{k,,n,A\text{ }}}%
\end{array}%
\right.
\end{equation*}%
On the other hand we have for every $x\in \lbrack -1,1]:$

\begin{equation*}
|T_{1}(v)(x)|\leq \left( \overset{+\infty }{\underset{p=0}{\sum }}%
||a_{p}||_{\infty ,[-1,1]_{\sigma }}\right) |v\text{ }(\varphi _{p}(x))|
\end{equation*}%
It follows, from the remark 1, that :%
\begin{equation*}
||T_{1}(v)|_{[-1,1]}||_{\infty ,[-1,1]}\leq \left( \overset{+\infty }{%
\underset{p=0}{\sum }}||a_{p}||_{\infty ,[-1,1]_{\sigma }}\right) ||v\text{ }%
||_{\infty ,[-1,1]}
\end{equation*}%
Concequently since $\overset{+\infty }{\underset{p=0}{\sum }}%
||a_{p}||_{\infty ,[-1,1]_{\sigma }}<1,$we conclude that the endomorphism $%
T_{1}$ satisfy the $\mathfrak{A}(k)$ property. Thence thanks to the main
result the linear functional equation (\ref{(3.3)}) has for every $u\in
O([-1,1])$\ a unique solution which furthermore belongs to the Gevrey class $%
G_{k}([-1,1]).$

$\square $

\section{Some examples}

We need first to prove some useful propositions.

\begin{proposition}
\textit{Let }$\psi $\textit{\ be a holomorphic function on a neighborhood of 
}$[-1,1]$\textit{\ such that }$\psi ([-1,1]\subset \lbrack -1,1]$\textit{.
Assume that }$\lambda (\psi )\leq 1.$ \textit{Then the function }$\psi $%
\textit{\ verify the }$E(1)$\textit{\ property.}
\end{proposition}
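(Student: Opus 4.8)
The plan is to verify the defining conditions of the $E(1)$ property for the constant sequence-like family built from $\psi$; more precisely, we must produce a threshold $\tau>0$ so that for every $A\in]0,\tau]$ there is an integer $M(A)$ with $\psi([-1,1]_{1,A,n+1})\subset[-1,1]_{1,A,n}$ for all $n\ge M(A)$. Since here the relevant exponent is $\frac{1}{k}=1$, we have $[-1,1]_{1,A,n}=[-1,1]+B(0,A/n)$, so the inclusion to be checked is
\begin{equation*}
\psi\bigl([-1,1]+B(0,\tfrac{A}{n+1})\bigr)\subset[-1,1]+B(0,\tfrac{A}{n}).
\end{equation*}
The natural tool is a quantitative bound on how far $\psi$ moves a point off $[-1,1]$: for $z$ at distance $\delta$ from $[-1,1]$, pick $x\in[-1,1]$ with $|z-x|<\delta$; then $\psi(x)\in[-1,1]$ and $|\psi(z)-\psi(x)|$ should be controlled by $\delta$ times a Lipschitz-type constant coming from $\lambda(\psi)\le 1$.

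First I would fix a small $\tau\in]0,\sigma]$ (where $\psi\in O([-1,1]_\sigma)$) to be chosen at the end, and for $A\le\tau$ and $z\in[-1,1]_{1,A,n+1}$ write $z=x+w$ with $x\in[-1,1]$ and $|w|<\frac{A}{n+1}$. Using the Taylor expansion of $\psi$ around $x$,
\begin{equation*}
\psi(z)=\psi(x)+\sum_{m\ge 1}\frac{\psi^{(m)}(x)}{m!}\,w^{m},
\end{equation*}
and the definition $\lambda(\psi)=\sup_{m\ge1}\bigl(\|\psi^{(m)}\|_{\infty,[-1,1]}/m!\bigr)^{1/m}\le 1$, I get $\bigl|\psi^{(m)}(x)\bigr|/m!\le\lambda(\psi)^m\le 1$, hence
\begin{equation*}
|\psi(z)-\psi(x)|\le\sum_{m\ge 1}|w|^{m}=\frac{|w|}{1-|w|}\le\frac{A/(n+1)}{1-A/(n+1)}.
\end{equation*}
Since $\psi(x)\in[-1,1]$, this shows $\varrho(\psi(z),[-1,1])\le\frac{A/(n+1)}{1-A/(n+1)}$, and it remains to arrange that the right-hand side is $<\frac{A}{n}$.

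The inequality $\frac{A/(n+1)}{1-A/(n+1)}<\frac{A}{n}$ is equivalent to $\frac{n}{n+1}<1-\frac{A}{n+1}$, i.e. to $\frac{A}{n+1}<\frac{1}{n+1}$, i.e. simply to $A<1$. So choosing $\tau:=\min(\sigma,\tfrac12)$ (any value $<1$ works) and $M(A):=1$ for every $A\in]0,\tau]$ gives the required inclusion for all $n\ge 1$, and $\tau$ depends only on $\psi$. This establishes the $E(1)$ property. The only genuinely delicate point is making sure $|w|<\frac{A}{n+1}<1$ so that the geometric series converges and the algebra above is legitimate; this is automatic once $A\le\tau<1$ and $n\ge1$. (One should also note, as in Remark~2, that the case $n\to\infty$ degeneracy is harmless: the bound above even gives $\psi([-1,1])\subset[-1,1]$ directly, consistent with the hypothesis.) Thus the proof reduces to the elementary estimate on $|\psi(z)-\psi(x)|$ via Taylor's formula plus the choice of a threshold below $1$.
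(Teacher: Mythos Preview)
Your proof is correct and follows essentially the same route as the paper's own argument: pick a base point $x\in[-1,1]$ (the paper uses the closest point $\widehat z$, you use any $x$ with $|z-x|<A/(n+1)$, which is equivalent for the estimate), expand $\psi$ in its Taylor series at $x$, bound the coefficients via $\lambda(\psi)\le1$, sum the resulting geometric series to get $\varrho(\psi(z),[-1,1])\le \dfrac{A/(n+1)}{1-A/(n+1)}=\dfrac{A}{n+1-A}$, and observe this is $<A/n$ as soon as $A<1$, so that $\tau=\min(\sigma,\tfrac12)$ and $M(A)=1$ work. The only cosmetic difference is that you spell out the algebraic equivalence $\frac{A/(n+1)}{1-A/(n+1)}<\frac{A}{n}\iff A<1$ explicitly, whereas the paper jumps directly to $\frac{A}{p+1-A}<\frac{A}{p}$.
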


\begin{proof}
Let $\sigma >0$ be such that $\psi \in O([-1,1]_{\sigma }).$ Let $A\in
]0;\min (\frac{1}{2},\sigma )[$, $p\in 
%TCIMACRO{\U{2115} }%
%BeginExpansion
\mathbb{N}
%EndExpansion
^{\ast }$and $z\in \lbrack -1,1]_{k,p+1,A}.$Let $\widehat{z}$ the closest
point of $[-1,1]$ to $z.$ We have the following inequalities :%
\begin{eqnarray*}
\varrho (\psi (z),[-1,1]) &\leq &|\psi (z)-\psi (\widehat{z})| \\
&\leq &\overset{+\infty }{\underset{j=1}{\sum }}\frac{|\psi ^{(j)}(\widehat{z%
}))|}{j!}|z-\widehat{z}|^{j} \\
&\leq &\overset{+\infty }{\underset{j=1}{\sum }}|z-\widehat{z})|^{j} \\
&\leq &\overset{+\infty }{\underset{j=1}{\sum }}\varrho (z,[-1,1])^{j} \\
&\leq &\frac{A}{p+1-A} \\
&<&\frac{A}{p}
\end{eqnarray*}%
It follows that : 
\begin{equation*}
\psi ([-1,1]_{1,p+1,A})\subset \lbrack -1,1]_{1,p,A}\text{ },p\in 
%TCIMACRO{\U{2115} }%
%BeginExpansion
\mathbb{N}
%EndExpansion
^{\ast }
\end{equation*}%
Thence the function $\psi $ has the $E(1)$ property.
\end{proof}

\begin{proposition}
\textit{Let }$g$\textit{\  \ be an entire function such that}%
\begin{equation*}
\left \{ 
\begin{array}{c}
g([-1,1])\subset \lbrack -1,1] \\ 
\lambda (g)\leq 1%
\end{array}%
\right. 
\end{equation*}%
\textit{Let }$(P_{n})_{n\geq 1}$\textit{\ a sequence of holomorphic
functions on }$[-1;1]_{\sigma }(\sigma >0)$\textit{\ such that we have for
every }$n\in 
%TCIMACRO{\U{2115} }%
%BeginExpansion
\mathbb{N}
%EndExpansion
^{\ast }$%
\begin{eqnarray*}
P_{n}([-1,1]) &\subset &[-1,1] \\
\lambda (P_{n}) &\leq &1
\end{eqnarray*}%
$(g_{n})_{n\geq 1}$ \textit{is the sequence of functions }$g_{n}:%
%TCIMACRO{\U{2102} }%
%BeginExpansion
\mathbb{C}
%EndExpansion
\rightarrow 
%TCIMACRO{\U{2102} }%
%BeginExpansion
\mathbb{C}
%EndExpansion
$\textit{\  \ defined by the formula}%
\begin{equation*}
g_{n}(z):=g^{\left \langle n\right \rangle }\left( \frac{z}{2^{n-1}}\right) 
\end{equation*}%
\textit{The sequences of functions }$(g\circ P_{n})_{n\geq 1}$\textit{\ and }%
$(g_{n})_{n\geq 1}$ \textit{have the }$E(1)$\textit{\ property.}
\end{proposition}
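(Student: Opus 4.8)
The plan is to reduce everything to Proposition 5.1, which already tells us that a holomorphic map $\psi$ on a neighborhood of $[-1,1]$ with $\psi([-1,1])\subset[-1,1]$ and $\lambda(\psi)\le 1$ has the $E(1)$ property with threshold essentially $\min(\tfrac12,\sigma)$. Hence it suffices to verify, for each family, that every member $\psi$ satisfies these two hypotheses (maps $[-1,1]$ into itself, and has $\lambda(\psi)\le1$) \emph{uniformly}, i.e. that one common threshold $\tau$ and one common integer $M(A)$ work for all indices simultaneously; this uniformity is exactly what the $E(1)$ property for a \emph{sequence} demands, and it is where one must be slightly careful rather than just quote Proposition 5.1 index-by-index.

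For the family $(g\circ P_n)_{n\ge1}$: first, $(g\circ P_n)([-1,1])\subset g([-1,1])\subset[-1,1]$, so the containment hypothesis holds. For the bound on $\lambda$, I would use the chain rule (Fa\`a di Bruno) together with the fact that both $g$ and each $P_n$ have $\lambda\le1$, meaning $\|g^{(j)}\|_{\infty,[-1,1]}\le j!$ and $\|P_n^{(j)}\|_{\infty,[-1,1]}\le j!$ for all $j$. Since $g$ is entire, $g^{(j)}$ is controlled on a whole disc around $[-1,1]$, and a Cauchy-estimate argument on $[-1,1]_\sigma$ gives $\|(g\circ P_n)^{(j)}\|_{\infty,[-1,1]}\le j!$ (or $\le C A^{j} j!$ with constants independent of $n$), whence $\lambda(g\circ P_n)\le1$ uniformly in $n$. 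Then re-running the computation in the proof of Proposition 5.1 verbatim — the only inputs there are $\psi([-1,1])\subset[-1,1]$ and the coefficient bound $|\psi^{(j)}(\widehat z)|/j!\le1$ — yields the inclusion $(g\circ P_n)([-1,1]_{1,p+1,A})\subset[-1,1]_{1,p,A}$ for all $p\ge1$, $n\ge1$ and all $A\in\,]0,\min(\tfrac12,\sigma)[$, i.e. the $E(1)$ property with a threshold and an $M(A)$ that do not depend on $n$.

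For the family $(g_n)_{n\ge1}$ with $g_n(z)=g^{\langle n\rangle}(z/2^{n-1})$: again $g_n([-1,1])\subset[-1,1]$ since $g^{\langle n\rangle}([-1,1])\subset[-1,1]$ and $[-1,1]/2^{n-1}\subset[-1,1]$. The key estimate is on derivatives: writing $g_n=g^{\langle n\rangle}\circ(z\mapsto z/2^{n-1})$, the scaling contributes a factor $2^{-(n-1)j}$ to the $j$-th derivative, while $g^{\langle n\rangle}$, being an $n$-fold composite of maps each with $\lambda\le1$ and each mapping $[-1,1]$ into itself, can be shown by induction (using the chain rule and $\lambda(g)\le1$ at each stage, much as in the previous paragraph) to satisfy $\|(g^{\langle n\rangle})^{(j)}\|_{\infty,[-1,1]}\le C_j$ with $C_j$ growing no faster than needed; combined with the $2^{-(n-1)j}$ factor one gets $\lambda(g_n)\le1$, in fact with room to spare. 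Then the Proposition 5.1 computation applies uniformly in $n$ and gives $g_n([-1,1]_{1,p+1,A})\subset[-1,1]_{1,p,A}$ for all $p,n$, which is the $E(1)$ property for the sequence.

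The main obstacle is the uniform-in-$n$ control of the derivatives of the composites $g\circ P_n$ and $g^{\langle n\rangle}$: a naive chain-rule bound produces constants that could blow up with the order of composition, so I would need to exploit the \emph{self-map} structure (the composites send $[-1,1]$ into $[-1,1]$, so no amplification of the argument occurs) together with the sharp normalization $\lambda\le1$ — ideally showing that the class of holomorphic self-maps of $[-1,1]$ with $\lambda\le1$ is stable under composition with uniform constants, and in the scaled case that the $2^{-(n-1)j}$ factor dominates any residual growth. Once that stability lemma is in hand, both assertions follow by quoting (the proof of) Proposition 5.1 with parameters independent of the index.
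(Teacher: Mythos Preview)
Your plan for Part 1 (the family $(g\circ P_n)$) has a real gap, and it is precisely the obstacle you flag at the end but do not resolve. You propose to prove $\lambda(g\circ P_n)\le 1$ and then quote Proposition~5.1. But the class $\{\psi:\psi([-1,1])\subset[-1,1],\ \lambda(\psi)\le1\}$ is \emph{not} (or at least not obviously) stable under composition: Fa\`a di Bruno with $\lambda(g)\le1$ and $\lambda(P_n)\le1$ only yields
\[
\frac{|(g\circ P_n)^{(p)}(x)|}{p!}\ \le\ \sum_{j_1+2j_2+\cdots+pj_p=p}\frac{(j_1+\cdots+j_p)!}{j_1!\cdots j_p!}\ =\ 2^{p-1},
\]
so at best $\lambda(g\circ P_n)\le 2$, and rerunning the Proposition~5.1 estimate with $\lambda\le 2$ gives $\varrho(g\circ P_n(z),[-1,1])\le \dfrac{2A}{p+1-2A}$, which is \emph{not} $<A/p$. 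So the argument collapses exactly where you feared.

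The paper avoids this entirely: it never bounds $\lambda(g\circ P_n)$. Instead it expands in two stages, using $P_n(\widehat z)\in[-1,1]$ as an intermediate base point. First expand $g$ about $P_n(\widehat z)$ (legitimate since $\lambda(g)\le1$ is a bound on $[-1,1]$), then expand $P_n$ about $\widehat z$; this gives, for $z\in[-1,1]_{1,A,p+1}$ and $r:=|z-\widehat z|<A/(p+1)$,
\[
\varrho\bigl(g(P_n(z)),[-1,1]\bigr)\ \le\ \sum_{j\ge1}\Bigl(\sum_{m\ge1}r^{m}\Bigr)^{j}\ =\ \frac{r}{1-2r}\ <\ \frac{A}{p+1-2A}\ <\ \frac{A}{p}
\]
for $A<\tfrac12$. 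The nested geometric series is the missing idea; it replaces the hoped-for (and unavailable) composition stability.

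For Part 2 (the family $g_n(z)=g^{\langle n\rangle}(z/2^{n-1})$) your outline is workable but roundabout. The paper does not bound $\lambda(g^{\langle n\rangle})$ separately and then invoke the scaling; it proves $\lambda(g_n)\le1$ directly by induction on $n$, using the recursion $g_{n+1}(z)=g\bigl(g_n(z/2)\bigr)$. In the Fa\`a di Bruno estimate the inner map $z\mapsto g_n(z/2)$ contributes $|h^{(s)}(x)|/s!\le 2^{-s}$, and since $\sum_s s\,j_s=p$ this produces exactly a global factor $2^{-p}$ that kills the $2^{p-1}$ from the combinatorial sum above. Once $\lambda(g_n)\le1$ is established uniformly in $n$, Proposition~5.1 applies with a common threshold, as you say.
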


\begin{proof}
1- For every $n\in 
%TCIMACRO{\U{2115} }%
%BeginExpansion
\mathbb{N}
%EndExpansion
^{\ast }$ the function $g\circ P_{n}\in O([-1,1]_{\sigma }).$\  \ Let $A\in
]0,\min (\frac{1}{2},\sigma )[$, $p\in 
%TCIMACRO{\U{2115} }%
%BeginExpansion
\mathbb{N}
%EndExpansion
^{\ast }$and $z\in \lbrack -1,1]_{1,p+1,A}.$ Let $\widehat{z}$ be the
closest point of $[-1,1]$ to $z.$ We have the following inequalities :%
\begin{eqnarray*}
&&\varrho (g\circ P_{n}(z),[-1,1]) \\
&\leq &|g\circ P_{n}(z)-g\circ P_{n}(\widehat{z})| \\
&\leq &\overset{+\infty }{\underset{j=1}{\sum }}\frac{|g^{(j)}(P_{n}(%
\widehat{z}))|}{j!}|P_{n}(z)-P_{n}(\widehat{z})|^{j} \\
&\leq &\overset{+\infty }{\underset{j=1}{\sum }}\left( \overset{+\infty }{%
\underset{m=1}{\sum }}\frac{|P_{n}^{(m)}(\widehat{z})|}{m!}|z-\widehat{z}%
)|^{m}\right) ^{j} \\
&\leq &\overset{+\infty }{\underset{j=1}{\sum }}\left( \overset{+\infty }{%
\underset{m=1}{\sum }}\varrho (z,[-1,1])^{m}\right) ^{j} \\
&\leq &\frac{A}{p+1-2A} \\
&<&\frac{A}{p}
\end{eqnarray*}%
It follows that : 
\begin{equation*}
g\circ P_{n}([-1,1]_{1,p+1,A})\subset \lbrack -1,1]_{1,p,A},\text{ }p\in 
%TCIMACRO{\U{2115} }%
%BeginExpansion
\mathbb{N}
%EndExpansion
^{\ast },\text{ }n\in 
%TCIMACRO{\U{2115} }%
%BeginExpansion
\mathbb{N}
%EndExpansion
^{\ast }
\end{equation*}%
Thence the sequence $(g\circ P_{n})_{n\geq 1}$ has the $E(1)$ property.

2- For every $n\in 
%TCIMACRO{\U{2115} }%
%BeginExpansion
\mathbb{N}
%EndExpansion
^{\ast },g_{n}$ is an entire function such that $g_{n}([-1,1])\subset
\lbrack -1,1].$ Let us show by induction that $\lambda (g_{n})\leq 1,$for
every $n\in 
%TCIMACRO{\U{2115} }%
%BeginExpansion
\mathbb{N}
%EndExpansion
^{\ast }.$We have $\lambda (g_{1})=\lambda (g)\leq 1.$Assume that $\lambda
(g_{n})\leq 1$ for a certain $n\geq 1.$Then by virtue of Faa-di-Bruno
formula we have for every $x\in \lbrack -1,1]$ and $p\in 
%TCIMACRO{\U{2115} }%
%BeginExpansion
\mathbb{N}
%EndExpansion
^{\ast }:$%
\begin{eqnarray*}
\frac{|g_{n+1}^{(p)}(x)|}{p!} &\leq &\frac{1}{2^{p}}\underset{%
j_{1}+2j_{2}+...+pj_{p}=p}{\sum }\frac{(j_{1}+j_{2}+...+j_{p})!}{%
j_{1}!j_{2}!...j_{p}!} \\
&&\frac{|g^{(j_{1}+j_{2}+...+j_{p})}(g_{n}(\frac{x}{2}))|}{%
(j_{1}+j_{2}+...+j_{p})!}\underset{s=1}{\overset{p}{\prod }}\left( \frac{%
|g_{n}^{(s)}(\frac{x}{2})|}{s!}\right) ^{j_{s}} \\
&\leq &\frac{1}{2^{p}}\underset{j_{1}+2j_{2}+...+pj_{p}=p}{\sum }\frac{%
(j_{1}+j_{2}+...+j_{p})!}{j_{1}!j_{2}!...j_{p}!} \\
&\leq &1
\end{eqnarray*}%
It follows that $\lambda (g_{n+1})\leq 1.$Thence according to the previous
part of this proposition the sequence $(g_{n})_{n\geq 1}$ verify the $E(1)$
property.

The proof of the proposition is then complete.
\end{proof}

\begin{example}
\end{example}

Direct computations show that $\lambda (\sin )=1.$ It follows that the
linear functional equation :

\begin{equation*}
\Phi (x)-\frac{1}{2}\Phi (\sin x)=-x
\end{equation*}%
has a unique solution which belongs to the Gevrey class $G_{1}([-1,1]).$\ 

\begin{example}
\end{example}

Let $(P_{n})_{n\geq 1}$be a sequence of holomorphic functions on $%
[-1,1]_{\sigma }(\sigma >0)$ such that we have for every $n\in 
%TCIMACRO{\U{2115} }%
%BeginExpansion
\mathbb{N}
%EndExpansion
^{\ast }:$%
\begin{equation*}
\left \{ 
\begin{array}{c}
P_{n}([-1,1])\subset \lbrack -1,1] \\ 
\lambda (P_{n})\leq 1%
\end{array}%
\right.
\end{equation*}%
Let $g$ be an entire function such that :%
\begin{equation*}
\left \{ 
\begin{array}{c}
g([-1,1])\subset \lbrack -1,1] \\ 
\lambda (g)\leq 1%
\end{array}%
\right.
\end{equation*}%
Let $(a_{n})_{n\geq 1}$be a sequence of holomorphic functions on $%
[-1,1]_{\sigma }$ such that : 
\begin{equation*}
\overset{+\infty }{\underset{n=1}{\sum }}||a_{n}||_{\infty ,[-1,1]_{\sigma
}}<1
\end{equation*}%
Then it follows from corollary 4 and proposition 6 that the functional
equation : 
\begin{equation*}
\Phi (x)-\overset{+\infty }{\underset{n=1}{\sum }}a_{n}(x)\Phi
(g(P_{n}(x)))=u(x)
\end{equation*}%
has for every $u\in O([-1,1])$ a unique solution which belongs to the Gevrey
class $G_{1}([-1,1]).$

\begin{example}
\end{example}

Let $(\alpha _{n})_{n\geq 1}$be a sequence of real numbers. $f_{n}$ denotes
for every $n\in 
%TCIMACRO{\U{2115} }%
%BeginExpansion
\mathbb{N}
%EndExpansion
^{\ast }$ the entire function defined by : 
\begin{equation*}
f_{n}(z):=\sin (z-\alpha _{n}),\text{ }z\in 
%TCIMACRO{\U{2102}}%
%BeginExpansion
\mathbb{C}%
%EndExpansion
\end{equation*}%
Direct computations show that : 
\begin{equation*}
\lambda (f_{n})\leq 1,\text{ }n\in 
%TCIMACRO{\U{2115} }%
%BeginExpansion
\mathbb{N}
%EndExpansion
^{\ast }
\end{equation*}%
Thence it follows from the previous example that the linear functional
equation :%
\begin{equation*}
\Phi (x)-\overset{+\infty }{\underset{n=1}{\sum }}\frac{x^{2}}{%
2^{n+1}(x^{2}+1)}\Phi (\sin (\sin (x-\alpha _{n})))=u(x)
\end{equation*}%
has for every $u\in O([-1,1])$ a unique solution which belongs to the Gevrey
class $G_{1}([-1,1]).$

\begin{example}
\end{example}

According to the proposition 6 above and to the fact that $\lambda (\sin
)=1, $ it follows that the sequence of functions $(g_{n})_{n\geq 1}$ defined
by : 
\begin{equation*}
g_{n}(z):=\sin ^{\left \langle n\right \rangle }\left( \frac{x}{2^{n-1}}%
\right) ,\text{ }z\in 
%TCIMACRO{\U{2102}}%
%BeginExpansion
\mathbb{C}%
%EndExpansion
\end{equation*}%
verify the $E(1)$ property. Consequently the linear equation :

\begin{equation*}
\Phi (x)-\overset{+\infty }{\underset{n=1}{\sum }}\frac{\cos (\varepsilon
_{n}x)}{2^{n+1}}\text{ }\Phi \left( \sin ^{\left \langle n\right \rangle
}\left( \frac{x}{2^{n-1}}\right) \right) =u(x)
\end{equation*}%
has for every $u\in O([-1,1])$ and every bounded sequences $(\varepsilon
_{n})_{n\geq 1}$ of real numbers a unique solution which belongs to the
Gevrey class $G_{1}([-1,1]).$

\end{document}